\newtheorem{theorem}{Theorem}[section]
\newtheorem{question}[theorem]{Question}
\newtheorem*{remark*}{Remark}
\numberwithin{equation}{section}
\numberwithin{figure}{section}
\DeclareMathOperator{\sech}{sech}
\def\intave#1{\int_{#1}\hbox{\llap{$\raise2.3pt\hbox{\vrule
height.9pt width7pt}\phantom{\scriptstyle{#1}}\mkern-2mu$}}}
\def\intav#1{\mathchoice
          {\mathop{\vrule width 9pt height 3 pt depth -2.6pt
                  \kern -9pt \intop}\nolimits_{\kern -6pt#1}}%
          {\mathop{\vrule width 5pt height 3 pt depth -2.6pt
                  \kern -6pt \intop}\nolimits_{#1}}%
          {\mathop{\vrule width 5pt height 3 pt depth -2.6pt
                  \kern -6pt \intop}\nolimits_{#1}}%
          {\mathop{\vrule width 5pt height 3 pt depth -2.6pt
                  \kern -6pt \intop}\nolimits_{#1}}}
\def\intav#1{\vint_{#1}}
\begin{document}
\title{A stronger constant rank theorem}
\author{Qinfeng Li, \quad Lu Xu}
\address{School of Mathematics, Hunan University, Changsha, 410082,
Hunan Province, P.R. China}
\email{liqinfeng@hnu.edu.cn}
\address{School of Mathematics, Hunan University, Changsha, 410082,
Hunan Province, P.R. China}
\email{xulu@hnu.edu.cn}

\thanks{2020 Mathematics Subject Classification: Primary 35E10, Secondary 35J60.}
\thanks{Keywords: Constant rank theorem; Convex solution}
\thanks{Research of the first author is supported by National Key R$\&$D Program of China (2022YFA1006900) and the
National Science Fund for Youth Scholars (No. 1210010723). Research of the second author is supported by NSFC NO. 12171143.}

\begin{abstract}
Motivated from one-dimensional rigidity results of entire solutions to Liouville equation, we consider the semilinear equation
\begin{align}
    \label{liouvilleequationab}
\Delta u=G(u) \quad \mbox{in $\mathbb{R}^n$},
\end{align}where $G>0, G'<0$ and $GG^{''}\le A(G')^2$, with $A>0$. Let $u$ be a smooth convex solution and $\sigma_k(D^2 u)$ be the $k$-th elementary symmetric polynomial with respect to $D^2u$. We prove stronger constant rank theorems in the following sense:
\begin{enumerate}
    \item When $A\le  2$, if $\sigma_2(D^2u)$ takes a local minimum, then $D^2 u$ has constant rank $1$.
    \item  When $A\le \frac{n}{n-1}$, if $\sigma_n(D^2 u)$ takes a local minimum, then $\sigma_n(D^2 u)$ is always zero in the domain.
\end{enumerate}

\end{abstract}

\maketitle
\section{Introduction}
\subsection{Motivation}
The paper is motivated in the study of classification of entire solutions to the Liouville equation
\begin{align}
\label{liouvilleequation}
    -\Delta u=e^{2u}, \quad \mbox{in $\mathbb{R}^n$}.
\end{align}
When $n=2$, \eqref{liouvilleequation} describes planar conformal metrics of Gaussian curvature equal to $1$, and when $n\ge 3$, the equation also has its own interest since it arises in the theory of gravitational equilibrium of polytropic stars, and classification of solutions to \eqref{liouvilleequation} is crucial
in the study of the corresponding problems in bounded domains, as pointed in Farina \cite{Farina}.

Due to the seminal work of Chen-Li\cite{CL}, it is known that when $n=2$ and if the solution $u$ satisfies $\int_{\mathbb{R}^n}e^{2u}\, dx <\infty$, then $u$ is radial about a point. A different classification result is due to Dancer-Farina \cite{DF}, where it is shown that there are no finite Morse index solutions to \eqref{liouvilleequation} when the dimension is between $3$ and $9$, and when dimension is equal to $2$, finite Morse index solutions must be radial about a point. Recently in \cite{EGLX}, the authors prove several rigidity results based on other various geometric and asymptotic assumptions, and the following new question was proposed:
\begin{question}
\label{concavity}
Are all concave solutions to \eqref{liouvilleequation} necessarily one-dimensional?
\end{question}

The motivation of Question \ref{concavity} comes from the following connection between the Liouville equation \eqref{liouvilleequation} and the De Giorgi conjecture on phase transitions. A one dimensional solution to \eqref{liouvilleequation} looks like $u(x',x_n)=\ln(\sech x_n)$ up to translation, scaling and rotation, where $x'=(x_1,\cdots, x_{n-1})$. Interestingly, $$\frac{\partial u}{\partial x_n}=-\tanh x_n,$$ which up to scaling is the one-dimensional solution to the classical Allen-Cahn equation
       \begin{align}
       \label{acequation}
           \Delta u=u^3-u,\quad \mbox{in $\mathbb{R}^n$}.
       \end{align}
The De Giorgi conjecture says that if along a direction, the directional derivative of the solution to Allen-Cahn equation does not change sign, then the solution must be one-dimensional. Considering the negative trace of the hessian of solution to \eqref{liouvilleequation}, a natural assumption in the case of Liouville equation is thus the negativity of hessian of the solution, from which Question \ref{concavity} arises. In \cite{EGLX}, the authors conjectured that the answer to Question \ref{concavity} is positive.

When $n=2$, the conjecture is completely proved by Bergweiler-Eremenko-Langley \cite{BEL} via tools from complex analysis and geometric function theory. Actually in two dimensions, they obtain that all quasi-concave solutions are all one-dimensional. When $n \ge 3$, the conjecture is also validated in \cite{EGLX} by adding the following additional assumption:
\begin{align}
    \label{supcondition}
\mbox{$u$ takes its supremum at some point in  $\mathbb{R}^n$}.
\end{align} Indeed, \eqref{supcondition} and asymptotic analysis guarantees that a concave solution $u$ cannot be strictly concave everywhere, and thus by the seminal constant rank theorem proved in Korevaar-Lewis \cite{KL87}, the rank of its hessian must always be less than $n$, and then a dimension reduction argument can prove that $u$ depends only on one variable. For the details, we refer to \cite{EGLX}. Without the additional assumption \eqref{supcondition}, Question \ref{concavity} remains open for $n \ge 3$.

We remark that to solve Question \ref{concavity} in higher dimensions, the argument in the study of De Giorgi conjecture does not work. The De Giorgi conjecture is closely related to minimal surface theory and thus the tools there can produce essential progress in solving the De Giorgi conjecture. Due to Ghoussoub-Gui \cite{GG98} and Ambrosio-Cabr\'e \cite{AC20}, De Giorgi's conjecture is proved in dimension 2 and 3. In dimension 4 to 8, under a further limit condition, Savin \cite{Savin} validated the conjecture, and we also refer to Wang \cite{Wang2017} where a different proof is given. When $n \ge 9$, a counterexample is given by Del Pino-Kowalczyk-Wei \cite{DKW}. On the other hand, so far there has no clear geometric interpretation of the higher dimensional Liouville equation, though the toda system does describe interactions of finite ends of solutions to the Allen-Cahn equation, see the deep work of Wang-Wei \cite{wangwei}. Also, the non sign-changing property of directional derivative of a solution to the \eqref{acequation} implies that the solution is stable, which plays a crucial role in proving De Giorgi conjecture, while the concavity of solutions to \eqref{liouvilleequation} does not imply local minimality property in calculus of variations.

\medskip

Question \ref{concavity} motivates us to have a closer study of the constant rank theorem for elliptic equations,  which we believe is the right tool to solve Question \ref{concavity}, as illustrated in \cite{EGLX} under the extra assumption \eqref{supcondition}. Hence we turn our study to analyzing structures of convex solutions to the following more general semilinear elliptic equation
\begin{align}
\label{semi}
    \Delta u= G(u) \quad \mbox{in $\mathbb{R}^n$}
\end{align}where $G$ is a smooth function satisfying \begin{align}
    \label{conditonofG}
    G>0, \quad G'<0\quad \mbox{and}\quad G G'' \le A(G')^2,
\end{align}with $A$ being a positive constant. The equation \eqref{semi} with the condition \eqref{conditonofG} has been first considered in \cite{CF} and \cite{KL87}. Note that if $u$ is a concave solution to the Liouville equation \eqref{liouvilleequation}, then by letting $v=-u$, $v$ becomes a convex solution to \eqref{semi} with $G(v)=e^{-2v}$. Hence such $G$ satisfies \eqref{conditonofG}, and $A$ can be chosen to be $1$.

Constant rank theorem, though has its various versions, generally says that the hessian of a convex solution must have constant rank in its domain, if the equation satisfies certain structural conditions. The earliest version of constant rank theorem was due to Caffarelli-Friedman \cite{CF} and considers semilinear equations in $\mathbb{R}^2$, and was later generalized to $n$ dimensions by Korevaar-Lewis \cite{KL87}. We also refer to Bian-Guan \cite{BG}, Bian-Guan-Ma-Xu \cite{BGMX}, Bryan-Ivaki-Scheuer \cite{BIS23}, Caffarelli-Guan-Ma \cite{CGM}, Guan-Lin-Ma \cite{GLM}, Guan-Ma \cite{GM}, Guan-Ma-Zhou \cite{GMZ}, Guan-Xu \cite{GX}, Ma-Xu \cite{MX08}, Sun-Xu \cite{SX21} and  Sz$\acute{e}$kelyhidi-Weinkove\cite{SW16,SW21} for the case of fully nonlinear elliptic equations together with geometrical applications.

Constant rank theorem is usually served as a tool to prove (strict) convexity property of solutions defined in convex domains, via the argument of continuity method. For example in \cite{CF}, it is used to show that the critical point of Robin functions on convex domains must be non-degenerate. In \cite{EGLX}, it was the first time that the constant rank theorem was successfully applied to prove one-dimensional property of concave solutions, though the additional assumption \eqref{supcondition} is required. In order to drop the assumption \eqref{supcondition}, we need to have a better understanding of properties of convex solutions to \eqref{semi}, and this is the aim of the present paper. We will prove a stronger version of constant rank theorem.

\medskip

\subsection{Main Results}
By revisiting Caffarelli and Friedman's proof in \cite{CF}, we first prove the following result without much difficulty:
\begin{theorem}
\label{nostrictconvexitylemma}
Let $u$ be a convex solution to \eqref{semi} in $\mathbb{R}^2$, with $G$ satisfying \eqref{conditonofG}, where $A \le 2$. If $det (D^2 u)$ has a local minimum, then $det (D^2 u)=0$ everywhere.
\end{theorem}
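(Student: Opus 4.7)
Let $q := \det(D^2 u) = u_{11}u_{22} - u_{12}^2$ and suppose $q$ attains a local minimum at some $x_0 \in \mathbb{R}^2$. The plan is to split into two cases according to the value $q(x_0)$. If $q(x_0) = 0$, the classical Caffarelli--Friedman constant rank theorem for \eqref{semi} in 2D \cite{CF} (which is precisely the setting where it was first established, under the hypothesis $A \leq 2$ recalled in the introduction) immediately yields $q \equiv 0$ on $\mathbb{R}^2$. Hence the substantive case is $q(x_0) > 0$, and the strategy is to derive a contradiction with the second-order necessary condition $\Delta q(x_0) \geq 0$ by showing instead that $\Delta q(x_0) < 0$ strictly.

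To carry this out I would change coordinates at $x_0$ so that $D^2 u(x_0)$ is diagonal with positive eigenvalues $\lambda_1, \lambda_2$ satisfying $\lambda_1 + \lambda_2 = G(u(x_0))$ and $\lambda_1\lambda_2 = q(x_0)$. Setting $a = u_{111}(x_0)$, $b = u_{222}(x_0)$, $c = u_{112}(x_0)$, $d = u_{122}(x_0)$, differentiating the PDE once gives $a + d = G'u_1$ and $c + b = G'u_2$, while differentiating twice gives $\Delta u_{ii}(x_0) = G''u_i^2 + G'\lambda_i$. A direct expansion of $\Delta q = q_{11} + q_{22}$ at $x_0$ then produces
\begin{align*}
\Delta q(x_0) = G''\bigl(\lambda_2 u_1^2 + \lambda_1 u_2^2\bigr) + 2G' q + 2(ad + bc) - 2(c^2 + d^2).
\end{align*}
The first-order condition $\nabla q(x_0) = 0$ (combined with $u_{12}(x_0) = 0$) reads $a(\lambda_2 - \lambda_1) = -\lambda_1 G'u_1$ and $b(\lambda_1 - \lambda_2) = -\lambda_2 G'u_2$. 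In the generic subcase $\lambda_1 \neq \lambda_2$, these uniquely fix $a, b$; substituting back, together with $c = G'u_2 - b$ and $d = G'u_1 - a$, and using $(\lambda_1 - \lambda_2)^2 = G^2 - 4q$, I expect the expression to collapse to
\begin{align*}
\Delta q(x_0) = \bigl(\lambda_2 u_1^2 + \lambda_1 u_2^2\bigr)\left[G'' - \frac{2G(G')^2}{(\lambda_1 - \lambda_2)^2}\right] + 2G' q.
\end{align*}

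The structural condition $GG'' \leq A(G')^2$ with $A \leq 2$ now enters decisively: the bracket is bounded above by $\frac{(G')^2}{G(\lambda_1 - \lambda_2)^2}\bigl[(A - 2)G^2 - 4Aq\bigr]$, which is strictly negative since $A \leq 2$ and $q > 0$. As $\lambda_2 u_1^2 + \lambda_1 u_2^2 \geq 0$ and $2G'q < 0$, this forces $\Delta q(x_0) < 0$, the desired contradiction. The degenerate subcase $\lambda_1 = \lambda_2$ is easier: the first-order condition then forces $\nabla u(x_0) = 0$, and direct substitution gives $\Delta q(x_0) = 2G'\lambda_1^2 - 4(a^2 + b^2) < 0$. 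I anticipate the main obstacle to be the bookkeeping in the algebraic collapse of the generic case: verifying that after substituting the pinned-down values of $a, b$ the quadratic in third derivatives reduces to the single clean coefficient $G'' - 2G(G')^2/(\lambda_1 - \lambda_2)^2$ requires repeated use of $\lambda_1 + \lambda_2 = G$ and $\lambda_1\lambda_2 = q$. This is also the point at which the threshold $A \leq 2$ is visibly sharp: precisely in this regime is $(A - 2)G^2 - 4Aq$ strictly negative for every $q > 0$, so the conclusion $\Delta q(x_0) < 0$ is genuinely strict, which is what promotes the result from ``$q \equiv 0$ if $q$ vanishes somewhere'' to ``$q \equiv 0$ if $q$ has any local minimum.''
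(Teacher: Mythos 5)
Your proposal is correct and follows essentially the same route as the paper: diagonalize $D^2u$ at an interior positive local minimum, use the vanishing gradient of the determinant together with the differentiated PDE to pin down the third derivatives, and contradict the second-order minimality condition via $GG''\le A(G')^2$ with $A\le 2$ (I checked that your claimed collapse to the coefficient $G''-2G(G')^2/(\lambda_1-\lambda_2)^2$ and the identity $(\lambda_1-\lambda_2)^2=G^2-4q$ do work out). The only difference is bookkeeping: the paper tracks $\phi=2\det(D^2u)=G^2-|D^2u|^2$ and verifies nonpositivity of two polynomials $B_1,B_2$ in the ratio $u_{22}/(u_{11}-u_{22})$, whereas your single-bracket form is, if anything, a slightly cleaner way to see where $A\le 2$ enters.
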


Note that this theorem generalizes that in \cite{CF}, since for a convex solution $u$ to \eqref{semi}, $det(D^2 u)=0$ at some point $p$ automatically implies that $p$ is a local minimum point of $det(D^2 u)$. Our theorem says that the converse is also true.

Now we let $\sigma_k$ be the $k$-th elementary symmetric polynomials in $n$ variables, and hence when $n=2$, the determinant of a square matrix is the same as the $\sigma_2$ of the eigenvalues of the matrix. Surprisingly, it turns out that similar result to Theorem \ref{nostrictconvexitylemma} holds for $\sigma_2(D^2 u)$ in higher dimensions, while the proof is yet much more complicated.

\begin{theorem}
\label{sigma2theorem}
Let $u$ be a convex solution to the semilinear elliptic equation \eqref{semi} in $\Omega \subset \mathbb{R}^n$, with $G$ satisfying \eqref{conditonofG}, where $A \le 2$. If $\sigma_2(D^2 u)$ has a local minimum in the interior, the $D^2u$ must have rank $1$ everywhere in its domain.
\end{theorem}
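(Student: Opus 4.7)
My approach follows the Caffarelli--Friedman blueprint used in Theorem \ref{nostrictconvexitylemma}, now adapted to $\sigma_2(D^2 u)$ in arbitrary dimension. The starting algebraic identity
\[
\sigma_2(D^2 u)=\tfrac12\bigl((\Delta u)^2-|D^2 u|^2\bigr)=\tfrac12\bigl(G(u)^2-|D^2u|^2\bigr)
\]
reduces the problem to studying $|D^2u|^2$ modulo the explicit quantity $G(u)^2$. Let $x_0\in\Omega$ be an interior local minimum with value $m=\sigma_2(D^2 u)(x_0)\ge 0$. If $m=0$, then $D^2u(x_0)$ has rank $\le 1$, and the classical constant rank theorem (\cite{CF,KL87,BG}) propagates rank $\le 1$ to all of $\Omega$; combined with $\Delta u=G(u)>0$ this forces rank exactly $1$, giving the conclusion. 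The real content is therefore to rule out $m>0$.

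To that end I would introduce the first Newton transformation $T=\sigma_1(D^2u)\,I-D^2u$ associated to $\sigma_2$ and work with the linearized elliptic operator $L=\sum_{ij}T_{ij}\,\partial_{ij}$. Differentiating $\Delta u=G(u)$ twice, tracing against $T$, and reorganizing with the Codazzi symmetry $u_{ijk}=u_{jik}=u_{jki}$, one expects an identity of the schematic form
\[
L\sigma_2(D^2u)=\mathcal Q(D^3u)+G'(u)\,\mathcal I_1(D^2u)+G(u)G''(u)\,\mathcal I_2(\nabla u,D^2u)+R,
\]
where $\mathcal Q(D^3u)\ge 0$ is a nonnegative quadratic form in the third derivatives obtained after Cauchy--Schwarz, $\mathcal I_1,\mathcal I_2$ are explicit polynomials in the Hessian and gradient, and $R$ is a combination of $\nabla\sigma_2$-linear terms that vanishes at $x_0$.

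The hypothesis $GG''\le 2(G')^2$ is equivalent to the convexity of $1/G$, and enters decisively here: it is precisely what is needed to bound the $GG''\,\mathcal I_2$-piece by the $(G')^2\,\mathcal I_1$-piece, leaving a sign-definite combination with the correct sign. After this absorption I expect
\[
L\sigma_2(D^2u)(x_0)\le -c(u)\,\sigma_2(D^2u)(x_0),\qquad c(u)>0,
\]
while the local minimum property gives $L\sigma_2(D^2u)(x_0)\ge 0$ (the coefficient matrix $T$ is positive semidefinite on the Hessian's eigenvector basis by convexity of $u$). This forces $m\le 0$, contradicting $m>0$.

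The principal obstacle I foresee is the assembly of $\mathcal Q(D^3u)$. For $n=2$ the Newton tensor $T$ is just the $2\times 2$ cofactor matrix and one recovers Caffarelli--Friedman's original computation, as exploited in Theorem \ref{nostrictconvexitylemma}. For $n\ge 3$, however, the cross terms $T_{ij}u_{ijk\ell}$ must be regrouped so that (i) Cauchy--Schwarz produces a manifestly nonnegative square, (ii) all residual pieces collapse into $\nabla\sigma_2$-multiples, and (iii) the $A\le 2$ threshold is consumed exactly. Any slack in (iii) appears to break the absorption in the previous paragraph, which both explains why the argument for $n\ge 3$ is substantially harder than the two-dimensional case and suggests that the constant $A\le 2$ is sharp.
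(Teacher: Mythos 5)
Your overall strategy matches the paper's: reduce the case $m=0$ to the classical constant rank theorem, and rule out $m>0$ by establishing a differential inequality of the form $(\text{elliptic operator})\,\phi - c\,\phi \le 0$ at the interior minimum, where $\phi=2\sigma_2(D^2u)$. However, the proposal stops exactly where the theorem's actual content begins: items (i)--(iii) of your "principal obstacle" are not a foreseeable technicality but the entire proof, and two of your structural choices would prevent you from completing them. First, the operator. You propose $L=\sum_{ij}T_{ij}\partial_{ij}$ with $T$ the first Newton tensor, but the fourth-order block of $L\phi$ is $\sum_{ij,kl}T_{ij}T_{kl}u_{ijkl}$, which the equation $\Delta u=G(u)$ does not control: differentiating the equation twice only gives you the \emph{trace} $\sum_k u_{kkij}=G'u_{ij}+G''u_iu_j$. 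The computation you describe ("differentiating $\Delta u=G(u)$ twice and tracing against $T$") in fact produces the fourth-order part of $\Delta\phi$, not of $L\phi$; the paper accordingly works with the plain Laplacian, writing $\tfrac12\Delta\phi=I+II+III$ with $II=\sum_{i\ne j}u_{ii}(\Delta u)_{jj}$, and the contradiction at the minimum comes from $\Delta\phi\ge 0$ there.

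Second, and more seriously, your absorption scheme cannot work as stated. You plan to bound the $G G''\,\mathcal I_2(\nabla u,D^2u)$ piece by a $(G')^2$ piece and then compare with a nonnegative quadratic form $\mathcal Q(D^3u)$ in the third derivatives. But these live in different variables: there is no pointwise comparison between $|\nabla u|^2$-type quantities and $|D^3u|^2$-type quantities. The missing idea is the active use of the first-order condition $\nabla\phi(p)=0$: combined with the differentiated equation $u_{kkk}+\sum_{j\ne k}u_{jjk}=G'u_k$, it yields the identity
\begin{align*}
G'u_k=\sum_{j\ne k}u_{jjk}\,\frac{u_{jj}-u_{kk}}{\Delta u-u_{kk}}\qquad\text{at }p,
\end{align*}
which converts every occurrence of $G''\,u_j^2=\frac{G''G}{G(G')^2}(G'u_j)^2\le \frac{A}{G}(G'u_j)^2$ into a quadratic form in $\{u_{jji}\}$. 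Only then does the whole right-hand side become a single quadratic form in the third derivatives, whose coefficients (after substituting $G=\Delta u$ and expanding in the eigenvalues $u_{ii}\ge 0$) can be organized into sums of squares with nonpositive signs precisely when $A\le 2$. You treat $\nabla\phi(p)=0$ only as a reason to discard the remainder $R$, which forfeits this mechanism. As written, the proposal is an accurate description of the shape of the argument but contains no verifiable proof of the key inequality $\tfrac12\Delta\phi-G'\phi\le 0$ at $p$.
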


\begin{remark*}
$A=2$ is the best constant in order that the argument of our proof works. Also, the last inequality in \eqref{conditonofG} is equivalent to saying that $1/G$ is convex.
\end{remark*}

Furthermore, We give another different generalization of Theorem \ref{nostrictconvexitylemma} in $n$ dimensions, by considering the case of $\sigma_n$.
\begin{theorem}
\label{sigman}
Let $u$ be a convex solution to \eqref{semi} in $\Omega \subset \mathbb{R}^n$, with $G>0, G'<0$ and  $GG^{''}\le \frac{n}{n-1}(G')^2$ in $\Omega$. Then if $det(D^2 u)$ has a local minimum, then $det(D^2 u)$ is always equal to $0$.
\end{theorem}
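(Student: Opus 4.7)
The plan is to split into two cases depending on the value of $\phi := \det(D^2 u)$ at the local minimum $x_0$. If $\phi(x_0)=0$, then since $u$ is convex we have $\phi \ge 0$ throughout $\Omega$, so $x_0$ is a global minimum on its connected component, and the classical Korevaar-Lewis constant rank theorem \cite{KL87}---valid under precisely the hypothesis $GG'' \le \frac{n}{n-1}(G')^2$---yields $\phi \equiv 0$ on that component. So the real work is the opposite case $\phi(x_0)>0$, where the goal is to derive a contradiction.

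Assume $\phi(x_0)>0$ and diagonalize $D^2 u$ at $x_0$ so that $u_{ij}(x_0)=\lambda_i \delta_{ij}$ with all $\lambda_i>0$. The standard $\log\det$ identity combined with $\Delta u = G(u)$ gives
\begin{equation*}
\Delta\log\phi(x_0) \;=\; -\sum_{i,j,k} \frac{u_{ijk}^2}{\lambda_i \lambda_j} \;+\; G''(u)\sum_i \frac{u_i^2}{\lambda_i} \;+\; nG'(u).
\end{equation*}
Since $x_0$ is a local minimum with $\phi>0$, one has $\Delta\log\phi(x_0) \ge 0$, so a contradiction follows once the right-hand side is shown to be strictly negative. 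If $\nabla u(x_0)=0$ this is immediate: the middle term vanishes, the first term is non-positive, and $nG'<0$.

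For $\nabla u(x_0)\neq 0$ I use two linear constraints on the third derivatives $u_{iik}$, each valid for every $k$: the local-minimum condition $\sum_i u_{iik}/\lambda_i = 0$ coming from $\nabla\log\phi=0$, together with the equation-derived $\sum_i u_{iik} = G'(u)\,u_k$. The objective $\sum_{i,j,k} u_{ijk}^2/(\lambda_i\lambda_j)$ splits according to the three-tuple structure; the constraints couple only within each ``column'' $j$ of the array $A_{ij}:=u_{iij}$, so setting the fully distinct third derivatives $u_{ijk}$ (those with $i,j,k$ all different) to zero and running Lagrange minimization column by column produces the lower bound
\begin{equation*}
\sum_{i,j,k} \frac{u_{ijk}^2}{\lambda_i\lambda_j} \;\ge\; (G'(u))^2 \sum_j c_j(\lambda)\, u_j^2,
\end{equation*}
where $c_j(\lambda) = P_0/(P_0 P_2 - P_1^2)$ and $P_\ell := \sum_i \lambda_i^\ell/\omega_i^{(j)}$ with $\omega_j^{(j)} = 1$, $\omega_i^{(j)} = 1 + 2\lambda_i/\lambda_j$ for $i\ne j$. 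Combining with the hypothesis $G'' \le \frac{n}{n-1}(G')^2/G$ upgrades the identity to
\begin{equation*}
\Delta\log\phi(x_0) \;\le\; -\frac{(G')^2}{G}\sum_j \frac{u_j^2}{\lambda_j}\Bigl(c_j(\lambda)\, G\lambda_j - \tfrac{n}{n-1}\Bigr) \;+\; nG'(u).
\end{equation*}

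The argument therefore reduces to the sharp algebraic inequality $c_j(\lambda)\, G\lambda_j \ge \frac{n}{n-1}$ for every $j$: granted this, the parenthesized factor is non-negative, the first term is $\le 0$, and $nG'<0$ forces $\Delta\log\phi(x_0)<0$, contradicting the local-minimum property. For $n=2$ the inequality collapses to $(\lambda_1+\lambda_2)^2 \ge (\lambda_1-\lambda_2)^2$, which is strict whenever $\phi=\lambda_1\lambda_2>0$, giving the Caffarelli-Friedman constant $A=2=n/(n-1)$. The main technical obstacle is to establish its analogue $(n-1)\, P_0\, G\lambda_j \ge n(P_0 P_2 - P_1^2)$ for arbitrary $n$; I expect this to follow by expanding the $P_\ell$ in the specific weights $\omega_i^{(j)} = (\lambda_j+2\lambda_i)/\lambda_j$ and invoking a Cauchy-Schwarz or rearrangement identity, so that the critical constant $n/(n-1)$ emerges from the structure of the column-wise minimization problem.
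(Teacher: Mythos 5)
Your setup is correct and is a cleaner reformulation than the paper's: the $\log\det$ identity, the reduction to the case $\phi(x_0)>0$ with $\nabla u(x_0)\neq 0$, the observation that the two linear constraints $\sum_i u_{iik}/\lambda_i=0$ and $\sum_i u_{iik}=G'u_k$ decouple over $k$, the discarding of the fully-distinct $u_{ijk}$ (legitimate since all weights $1/(\lambda_i\lambda_j)$ are positive), and the formula $c_j=P_0/(P_0P_2-P_1^2)$ for the two-constraint minimum are all right. The paper instead works with $\Delta\phi-nG'\phi$ directly, uses only the critical-point constraint to eliminate $u_{mmm}$, and then verifies negativity of the resulting quadratic form in $(u_{iim})_{i\ne m}$ by an explicit decomposition $I+II=\frac{\sigma_n^2}{\lambda_m^2}(a_1\lambda_m^2+a_2\lambda_m+a_3)$ with separate sign arguments for $a_1,a_2,a_3$ (positive semidefiniteness of $(\alpha+1)I-e\otimes e$, a Cauchy--Schwarz step for $a_2$, an AM--GM step for $a_3$). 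At a critical point of $\phi$ the two formulations are equivalent, and your version has the merit of compressing everything into one scalar inequality per index.

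However, that scalar inequality, $(n-1)\,P_0\,G\lambda_j\ \ge\ n\,(P_0P_2-P_1^2)$, is exactly where the entire content of the theorem lives --- it is the only place the hypothesis $A\le \frac{n}{n-1}$ is used and it is the analogue of all of the paper's $a_1,a_2,a_3\le 0$ analysis --- and you do not prove it: you verify it only for $n=2$ and then write that you ``expect'' it to follow from a Cauchy--Schwarz or rearrangement identity. That is a genuine gap, not a routine verification. The inequality is delicate: writing $P_0P_2-P_1^2=\sum_{i<l}(\lambda_i-\lambda_l)^2/(\omega_i^{(j)}\omega_l^{(j)})$ and testing $\lambda_j=1$, $\lambda_i=\varepsilon\to 0$ for $i\ne j$ shows both sides tend to $2(n-1)$ for $n=3$ (and equality in the limit for general $n$), so the constant $n/(n-1)$ is sharp and there is no slack to absorb a crude estimate; any proof must exploit the precise structure of the weights $\omega_i^{(j)}=1+2\lambda_i/\lambda_j$. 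Until this inequality is established for all $n$ and all positive $\lambda$, the argument is a correct reduction of the theorem to an unproved claim rather than a proof. (Two minor points you should also make explicit: the degenerate case $P_0P_2=P_1^2$ occurs only when all $\lambda_i$ coincide, in which case the two constraints force $u_k=0$ and you are back in the $\nabla u=0$ case; and the case $\phi(x_0)=0$ is handled exactly as in the paper via Korevaar--Lewis, which applies because $A\le\frac{n}{n-1}\le 2$ is equivalent to convexity of $1/G$.)
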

Again, the proof of Theorem \ref{sigman} requires very sophisticated computation and sign analysis of plenty of algebraic terms and matrices, and the constant $\frac{n}{n-1}$ seems to be optimal. At this moment we do not know if similar conclusions hold for $\sigma_k(D^2 u),\, 3\le k\le n-1$, when $n \ge 4$.

Clearly Theorem \ref{sigma2theorem} and Theorem \ref{sigman} cover Theorem \ref{nostrictconvexitylemma}, but we still would like to include a separate proof of Theorem \ref{nostrictconvexitylemma}, since the proof is very simple and directly follows from previous computation in \cite{CF}.

\medskip

A possible application of Theorem \ref{sigma2theorem} is that if we can show the $\sigma_2$ of the hessian of any concave solution to \eqref{liouvilleequation} has a local minimum, then by Theorem \ref{sigma2theorem} and the result in \cite{KL87}, concave solutions are one-dimensional. Also, if we can show that the determinant of the hessian of any concave solution to  \eqref{liouvilleequation} has a local minimum, then by Theorem \ref{sigman} and a dimension reduction argument, we can still conclude that concave solutions are one-dimensional.

Even though at this moment we still cannot reach the two possible directions, we believe our Theorem \ref{sigma2theorem} and Theorem \ref{sigman} are still interesting by themselves, since they can be viewed as stronger constant rank theorems in the case of $\sigma_2$ and $\sigma_n$, and they might have applications in other contexts. Also, Theorem \ref{sigma2theorem} and Theorem \ref{sigman} somehow suggest that the answer to Question \ref{concavity} is positive.

\medskip

The outline of the paper is as follows. In section 2, we prove Theorem \ref{nostrictconvexitylemma}. In section 3, we prove Theorem \ref{sigma2theorem}. In section 4, we prove Theorem \ref{sigman}.

\section{Proof of Theorem \ref{nostrictconvexitylemma}}
\begin{proof}[Proof of Theorem \ref{nostrictconvexitylemma}]
Let $\phi =2 det (D^2u)=(\Delta u)^2 -|D^2 u|^2 =G^2-|D^2 u|^2$, where $|D^2 u|^2 =\sum_i \sum_j u_{ij}^2$.  Hence for $k=1,2$, \begin{align}
\label{gphi_k}
    \phi_k=2GG'u_k-2\sum_{i,j}u_{ij}u_{ijk}
\end{align} and
\begin{align}
\label{erweishizi}
    \sum_k\phi_{kk}=&2(G')^2|\nabla u|^2 +2GG^{''}|\nabla u|^2+2GG'\Delta u-2|\nabla^3 u|^2 -2\sum_{i,j}u_{ij}(\Delta u)_{ij}\nonumber\\
    =& \left(2(G')^2+2GG''\right)|\nabla u|^2 +2 G^2G'-2 |\nabla ^3 u|^2-2\sum_{i,j}u_{ij}G'u_{ij}-2\sum_{i,j}u_{ij}G''u_iu_j\nonumber\\
   = & \left(2(G')^2+2GG''\right)|\nabla u|^2 +2 G'\phi-2 |\nabla ^3 u|^2-2\sum_{i,j}u_{ij}G''u_iu_j.
\end{align}
Suppose that $\phi$ takes a local minimum at $p$. If $\phi(p)>0$, then we may WLOG assume that \begin{align}
    \label{grotation}u_{ij}(p)=0, \, i\ne j.
\end{align} At $p$, we have for each $k=1,2$,
\begin{align}
\label{gfirstvanishing}
    \phi_k=0.
\end{align}
Since $G'(u)u_k=(-\Delta u)_k$, by \eqref{gphi_k}-\eqref{gfirstvanishing} we have
\begin{align*}
    \sum_{i,j}u_{iik}u_{jj}-\sum_iu_{ii}u_{iik}=0,
\end{align*}that is,
\begin{align}
\label{gfirstobservation}
    u_{11}u_{22i}+u_{22}u_{11i}=0,\quad i=1,2.
\end{align}
Differentiating the equation \eqref{semi}, we have
\begin{align}
    \label{gsecondobservation}
u_{11i}+u_{22i}=G' u_i,\quad i=1,2.
\end{align}
If at $p$, $u_{11} \ne u_{22}$, combining \eqref{gfirstobservation} and \eqref{gsecondobservation}, for each $i=1,2$, we have
\begin{align}
    \label{g3rdderivativerepresentation}
u_{11i}=\frac{G'u_{11}u_i}{u_{11}-u_{22}} \quad \mbox{and}\quad u_{22i}=\frac{G'u_{22}u_i}{u_{22}-u_{11}}.
\end{align}
Since $\phi(p)=2u_{11}(p)u_{22}(p)>0$, we may assume $u_{11}(p)>u_{22}(p)>0$. Set $q:=\frac{u_{22}(p)}{u_{11}(p)-u_{22}(p)}> 0$. Since $u_{11}+u_{22}=G$, at $p$ we have
\begin{align*}
    u_{11}=\frac{1+q}{2q+1}G \quad \mbox{and}\quad u_{22}=\frac{q}{2q+1}G.
\end{align*}
Hence at $p$, \begin{align*}
    \frac{u_{11}}{u_{11}-u_{22}}=1+q.
\end{align*}
Therefore, by \eqref{g3rdderivativerepresentation}, we have that \begin{align}
    \label{nabla3}
|\nabla ^3 u|^2=u_{111}^2+3u_{221}^2+u_{222}^2+3u_{112}^2=\left((1+q)^2+3q^2\right)(G')^2|\nabla u|^2.
\end{align}
Therefore, from \eqref{erweishizi} we have
\begin{align*}
    \Delta \phi-2G'\phi=&2u_1^2\left((G')^2+GG''-(G')^2\left((1+q)^2+3q^2\right)-\frac{1+q}{1+2q}G''G\right)\\
    &+2u_2^2\left((G')^2+GG''-(G')^2\left(q^2+3(1+q)^2\right)-\frac{q}{1+2q}G''G\right)\\
    =&2u_1^2\left((G')^2(-4q^2-2q)+\frac{q}{1+2q}G''G\right)\\
    &+2u_2^2\left((G')^2(-4q^2-6q-2)+\frac{1+q}{1+2q}G''G\right)\\
    \le &2u_1^2(G')^2\left(-4q^2-2q+\frac{Aq}{1+2q}\right)\\
    &+2u_2^2(G')^2\left(-4q^2-6q-2+A\frac{1+q}{1+2q}\right).
\end{align*}
Let $B_1(q)=-4q^2-2q+\frac{Aq}{1+2q}$ and $B_2(q)=-4q^2-6q-2+A\frac{1+q}{1+2q}$. One can check $A=2$ is the best constant such that both $B_1(q)\le 0$ and $B_2(q) \le 0$ for all $q \ge 0$. Hence at $p$, $\Delta \phi -2G' \phi \le 0$, which contradicts the assumption that $\phi(p)$ is takes a local positive minimum value.

If at $p$ we have $u_{11}=u_{22}$, then $u_{11}=u_{22}=G/2$, and by
\eqref{gfirstobservation} and \eqref{gsecondobservation} we have that $\nabla u=0$. Hence at $p$m
\begin{align*}
    \Delta \phi-2G \phi=-2|\nabla ^3 u|^2 \le 0.
\end{align*}This still leads to a contradiction.
Hence if a local minimum of $\phi$ is achieved at $p$, $\phi(p)=0$ and thus $\phi \equiv 0$ by \cite{CF}.

\end{proof}

\section{Compuations in the case of $\sigma_2$}
In this section, we prove Theorem \ref{sigma2theorem}.
\begin{proof}[Proof of Theorem \ref{sigma2theorem}]
Let $\phi=2\sigma_2=(\Delta u)^2-\sum_{i,j}u_{ij}^2=\sum_{1\le i\ne j\le n}u_{ii}u_{jj}-\sum_{1\le i\ne j\le n}u_{ij}^2$. Suppose that $\phi$ takes a local minimum at $p$. If $\phi(p)=0$, then by constant rank theorem proved in \cite{KL87}, the conclusion automatically holds.

From now on, we assume that $\phi(p)>0$. Taking the derivative with respect to the $k$-th variable, we have
\begin{align}
\label{hphik}
    \phi_k=2\sum_{1\le i\ne j\le n}u_{ii}u_{jjk}-2\sum_{1\le i\ne j\le n}u_{ij}u_{ijk}.
\end{align}
At the point $p$, we may assume that $u_{ij}(p)$ is diagonalized. Also, clearly at least two of $u_{ii}(p), \, 1\le i\le n$ do not vanish, otherwise $\phi(p)$ would be zero.

Taking the $k$-th derivative on \eqref{hphik}, summing over $k$, and using that $u_{ij}(p)=0$ when $i\ne j$, we have that at $p$,
\begin{align}
\label{hphikk}
    \frac{1}{2}\Delta \phi=\sum_k\sum_{1\le i\ne j\le n}\left(u_{iik}u_{jjk}+u_{ii}u_{jjkk}-u_{ijk}^2\right):=I+II+III.
\end{align}
Now we first estimate
\begin{align}
\label{II1}
II=&\sum_{1\le i\ne j\le n}u_{ii}(\Delta u)_{jj}\nonumber\\
=&\sum_{i=1}^nu_{ii}\sum_{j\ne i}(G'u_{jj}+G^{''}u_j^2)\nonumber\\
=&G'\phi+G^{''}\sum_{i=1}^nu_{ii}\sum_{j\ne i}u_j^2\nonumber\\
= &G'\phi+\frac{G^{''}G\sum_{i=1}^nu_{ii}\sum_{j\ne i}(G'u_j)^2}{G(G')^2}\nonumber\\
\le&G'\phi+\frac{A}{G}\sum_{i=1}^nu_{ii}\sum_{j\ne i}(G'u_j)^2,
\end{align}
where we have used  \eqref{conditonofG}. Then at $p$, since $\phi_k=0$, from \eqref{hphik} we have
\begin{align*}
    \sum_{i=1}^nu_{ii}\sum_{j \ne i}u_{jjk}=0.
\end{align*}
Rewriting the above, for any fixed $k$ we have
\begin{align}
\label{xu1}
    \sum_{i\ne k}u_{ii}u_{kkk}+\sum_{j\ne k}\sum_{i \ne j}u_{ii}u_{jjk}=0.
\end{align}
Taking the $k$-th derivative on both sides of the equation \eqref{semi}, we have
\begin{align}
\label{xu2}
    u_{kkk}+\sum_{j\ne k}u_{jjk}=G'u_k.
\end{align}
Hence by \eqref{xu1} and \eqref{xu2}, we have
\begin{align}
\label{xu3}
    G'u_k=&-\frac{\sum_{j\ne k}\sum_{i \ne j}u_{ii}u_{jjk}}{\sum_{i\ne k}u_{ii}}+\sum_{j\ne k}u_{jjk}\nonumber\\
    =&\sum_{j\ne k}u_{jjk}\left(1-\frac{\sum_{i \ne j}u_{ii}}{\sum_{i\ne k}u_{ii}}\right)\nonumber\\
    =&\sum_{j\ne k}u_{jjk}\frac{u_{jj}-u_{kk}}{\sum_{i\ne k}u_{ii}}.
\end{align}
Plugging \eqref{xu3} into \eqref{II1}, we have
\begin{align}
    \label{II2}
II\le& G'\phi+ \frac{A}{G}\sum_{i=1}^nu_{ii}\sum_{j\ne i}\left(\sum_{l\ne j}u_{llj}\frac{u_{ll}-u_{jj}}{\sum_{m\ne j}u_{mm}}\right)^2\nonumber\\
=&G'\phi+ \frac{A}{G}\sum_{j=1}^n\sum_{i\ne j}u_{ii}\left(\sum_{l\ne j}u_{llj}\frac{u_{ll}-u_{jj}}{\sum_{m\ne j}u_{mm}}\right)^2\nonumber\\
=&G'\phi+ \frac{A}{G}\sum_{j=1}^n(\Delta u-u_{jj})\left(\sum_{l\ne j}u_{llj}\frac{u_{ll}-u_{jj}}{\Delta u-u_{jj}}\right)^2\nonumber\\
=&G'\phi+ \frac{A}{G}\sum_{j=1}^n\frac{1}{\Delta u-u_{jj}}\sum_{l\ne j}\left(u_{llj}^2(u_{ll}-u_{jj})^2+\sum_{k\ne l,j}u_{llj}u_{kkj}(u_{ll}-u_{jj})(u_{kk}-u_{jj})\right).
\end{align}
Next, we rewrite $I+III$ defined in \eqref{hphikk} as
\begin{align}
\label{1+3}
    I+III=&2\sum_{1\le i\ne j\le n}u_{iii}u_{jji}+\sum_{1\le i\ne j\le n}\sum_{k\ne i,j}u_{iik}u_{jjk}-2\sum_{1\le i\ne j\le n}u_{ijj}^2-\sum_{1\le i\ne j\le n}\sum_{k\ne i,j}u_{ijk}^2\nonumber\\
    \le&2\sum_{1\le i\ne j\le n}u_{iii}u_{jji}+\sum_{1\le i\ne j\le n}\sum_{k\ne i,j}u_{iik}u_{jjk}-2\sum_{1\le i\ne j\le n}u_{ijj}^2.
\end{align}
By \eqref{xu1} and \eqref{1+3}, we have
\begin{align}
\label{13}
    I+III\le& -2\sum_{1\le i\ne j\le n}\frac{\sum_{l\ne i}\sum_{m \ne l}u_{mm}u_{lli}}{\sum_{m\ne i}u_{mm}}u_{jji}+\sum_{1\le i\ne j\le n}\sum_{k\ne i,j}u_{iik}u_{jjk}-2\sum_{1\le i\ne j\le n}u_{ijj}^2\nonumber\\
    =&-2\sum_{1\le i\ne j\le n}\frac{\sum_{l\ne i}(\Delta u-u_{ll})u_{lli}}{\Delta u-u_{ii}}u_{jji}+\sum_{1\le i\ne j\le n}\sum_{k\ne i,j}u_{iik}u_{jjk}-2\sum_{1\le i\ne j\le n}u_{ijj}^2\nonumber\\
    =&-2\sum_{1\le i\ne j\le n}\left(\frac{\Delta u-u_{jj}}{\Delta u-u_{ii}}+1\right)u_{ijj}^2-2\sum_{1\le i\ne j\le n}\sum_{l \ne i,j}\frac{\Delta u-u_{ll}}{\Delta u-u_{ii}}u_{lli}u_{jji}+\sum_{1\le i\ne j\le n}\sum_{k\ne i,j}u_{iik}u_{jjk}\nonumber\\
    =&-2\sum_{1\le i\ne j\le n}\left(\frac{\Delta u-u_{jj}}{\Delta u-u_{ii}}+1\right)u_{ijj}^2+\sum_{1\le i\ne j\le n}\sum_{l \ne i,j}\left(1-2\frac{\Delta u-u_{ll}}{\Delta u-u_{ii}}\right)u_{lli}u_{jji}\nonumber\\
    =&-2\sum_{1\le i\ne j\le n}\frac{2\Delta u-u_{ii}-u_{jj}}{\Delta u-u_{ii}}u_{ijj}^2+\sum_{1\le i\ne j\le n}\sum_{l \ne i,j}\frac{2u_{ll}-\Delta u-u_{ii}}{\Delta u-u_{ii}}u_{lli}u_{jji}.
\end{align}
We also rewrite \eqref{II2} as
\begin{align}
\label{II3}
    II \le G'\phi+\frac{A}{G}\sum_{1\le i \ne j \le n}\frac{1}{\Delta u-u_{ii}}\left(u_{jji}^2(u_{jj}-u_{ii})^2+\sum_{l\ne i,j}u_{jji}u_{lli}(u_{jj}-u_{ii})(u_{ll}-u_{ii})\right).
\end{align}
Now by \eqref{hphikk}, \eqref{13} and \eqref{II3}, we obtain
\begin{align}
\label{xu4}
    \frac{1}{2}\Delta \phi-G'\phi \le& \sum_{i=1}^n\frac{1}{G(\Delta u-u_{ii})}\Big(\sum_{j\ne i}\left(-2G(2\Delta u-u_{ii}-u_{jj})+A(u_{jj}-u_{ii})^2\right)u_{jji}^2\nonumber\\
    &+\sum_{j\ne i}\sum_{l\ne i,j}\left(G(2u_{ll}-\Delta u-u_{ii})+A(u_{jj}-u_{ii})(u_{ll}-u_{ii})\right)u_{lli}u_{jji}\Big)\nonumber\\
    =&\sum_{i=1}^n\frac{1}{G(\Delta u-u_{ii})}\Big(\sum_{j\ne i}\left(-2G(2\Delta u-u_{ii}-u_{jj})+A(u_{jj}-u_{ii})^2\right)u_{jji}^2\nonumber\\
    &+\sum_{j\ne i}\sum_{l\ne i,j}\left(G(u_{jj}+u_{ll}-\Delta u-u_{ii})+A(u_{jj}-u_{ii})(u_{ll}-u_{ii})\right)u_{lli}u_{jji}\Big).
\end{align}
Concerning the coefficients in front of $u_{jji}^2$, note that
\begin{align}
    \label{xu5}
&-2G(2\Delta u-u_{ii}-u_{jj})+A(u_{jj}-u_{ii})^2\nonumber\\
=&-2\Delta u(2\sum_{k\ne i,j}u_{kk}+u_{ii}+u_{jj})+ A(u_{jj}-u_{ii})^2 \nonumber \\
=& -2(\sum_{k\ne i,j}u_{kk}+u_{ii}+u_{jj})(2\sum_{k\ne i,j}u_{kk}+u_{ii}+u_{jj})+ A(u_{jj}-u_{ii})^2\nonumber\\
=&-4(\sum_{k\ne i,j}u_{kk})^2-6\sum_{k\ne i,j}u_{kk}(u_{ii}+u_{jj})+(A-2)(u_{ii}-u_{jj})^2 -8u_{ii}u_{jj}.
\end{align}
Hence in order this to be negative, the best choice is that $A \le 2$.  Hence from \eqref{II1}, and previous computations, we may let $A=2$ in \eqref{xu4}. Concerning the coefficients in front of mixed third order derivatives, we have that
\begin{align}
    \label{xu6}
    &G(u_{jj}+u_{ll}-\Delta u-u_{ii})+2(u_{jj}-u_{ii})(u_{ll}-u_{ii})\nonumber\\
    =&-\Delta u(\Delta u+u_{ii}-u_{jj}-u_{ll})+2(u_{jj}-u_{ii})(u_{ll}-u_{ii})\nonumber\\
    =&-(\sum_{k\ne i,j,l}u_{kk}+u_{ii}+u_{jj}+u_{ll})(2u_{ii}+\sum_{k\ne i,j,l}u_{kk})+2(u_{jj}-u_{ii})(u_{ll}-u_{ii})\nonumber\\
    =&-(\sum_{k\ne i,j,l}u_{kk})^2-(\sum_{k\ne i,j,l}u_{kk})(3u_{ii}+u_{jj}+u_{ll})-4u_{ii}(u_{ll}+u_{jj})+2u_{jj}u_{ll}.
\end{align}
Hence by \eqref{xu4}-\eqref{xu6}, we have
\begin{align}
\label{xu7}
    \frac{1}{2}\Delta \phi-G'\phi \le& \sum_{i=1}^n\frac{1}{G(\Delta u-u_{ii})}\Bigg[-\sum_{j\ne i}\Big(4(\sum_{k\ne i,j}u_{kk})^2+6\sum_{k\ne i,j}u_{kk}(u_{ii}+u_{jj})+8u_{ii}u_{jj}\Big)u_{jji}^2\nonumber\\
    &+\sum_{j\ne i}\sum_{l\ne i,j}\Big(-(\sum_{k\ne i,j,l}u_{kk})^2-(\sum_{k\ne i,j,l}u_{kk})(3u_{ii}+u_{jj}+u_{ll})-4u_{ii}(u_{ll}+u_{jj})+2u_{jj}u_{ll}\Big)u_{lli}u_{jji}\Bigg]\nonumber\\
    =&\sum_{i=1}^n\frac{1}{G(\Delta u-u_{ii})} (I'+II'+III'),
\end{align}
where
\begin{align*}
    I'=-\sum_{j\ne i}\sum_{k\ne i,j}4u_{kk}^2u_{jji}^2-\sum_{j\ne i}\sum_{l\ne i,j}\sum_{k\ne i,j,l}u_{kk}^2u_{lli}u_{jji},
\end{align*}
\begin{align*}
    II'=&-\sum_{j\ne i}\left(4\sum_{k\ne i,j}\sum_{l\ne i,j,k}4u_{kk}u_{ll}
+6\sum_{k\ne i,j}u_{kk}u_{jj}\right)u_{jji}^2\nonumber\\
&-\sum_{j\ne i}\sum_{l\ne i,j}\Big(\sum_{k\ne i,j,l}\sum_{m\ne i,j,k,l}u_{kk}u_{mm}+(\sum_{k\ne i,j,l}u_{kk})(u_{jj}+u_{ll})-2u_{jj}u_{ll}\Big)u_{lli}u_{jji}\nonumber\\
\end{align*}and
\begin{align*}
    III'=-u_{ii}\sum_{j\ne i}\left((6\sum_{k\ne i,j}u_{kk}+8u_{jj})u_{jji}^2+\sum_{l\ne i,j}(3\sum_{k\ne i,j,l}u_{kk}+4u_{ll}+4u_{jj})u_{lli}u_{jji}  \right).
\end{align*}
We rewrite $I', II', III'$ as the following.
\begin{align}
\label{I1}
    I'    =&-\sum_{k\ne i}u_{kk}^2\left(4\sum_{j\ne i,k}u_{jji}^2+\sum_{j\ne i,k}\sum_{l\ne i,j,k}u_{jji}u_{lli}\right)\nonumber\\
    =&-\sum_{k\ne i}u_{kk}^2\left((\sum_{j\ne i,k}u_{jji})^2+3\sum_{j\ne i,k}u_{jji}^2\right).
\end{align}
\begin{align}
    \label{I2}
II'=&-\sum_{k\ne i}\sum_{j\ne i,k}u_{jj}u_{kk}\left(4\sum_{l\ne i,j,k}u_{lli}^2+6u_{jji}^2+\sum_{l\ne i,j,k}\sum_{m \ne i,j,k,l}u_{lli}u_{mmi}+2u_{jji}\sum_{l\ne i,j,k}u_{lli}-2u_{jji}u_{kki}\right)\nonumber\\
=&-\sum_{k\ne i}\sum_{j\ne i,k}u_{jj}u_{kk}\left(3\sum_{l\ne i,j,k}u_{lli}^2+5u_{jji}^2+u_{kki}^2+(\sum_{l\ne i,j,k}u_{lli})^2+2u_{jji}\sum_{l\ne i,j,k}u_{lli}-2u_{jji}u_{kki}\right)\nonumber\\
=&-\sum_{k\ne i}\sum_{j\ne i,k}\left(3\sum_{l\ne i,j,k}u_{lli}^2+(u_{jji}+\sum_{l\ne i,j,k}u_{lli})^2+(u_{jji}-u_{kki})^2+3u_{jji}^2\right).
\end{align}
\begin{align}
    \label{I3}
III'=&-u_{ii}\sum_{j\ne i}u_{jj}\left(6\sum_{k\ne i,j}u_{kki}^2+8u_{jji}^2+3\sum_{k\ne i,j}\sum_{l\ne i,j,k}u_{lli}u_{kki}+8u_{jji}\sum_{l\ne i,j}u_{lli}  \right)\nonumber\\
=&-u_{ii}\sum_{j\ne i}u_{jj}\left(3(\frac{4}{3}u_{jji}+\sum_{k\ne i,j}u_{kki})^2+\frac{8}{3}u_{jji}^2+3\sum_{k\ne i,j}u_{kki}^2 \right).
\end{align}
Hence by \eqref{xu7}-\eqref{I3}, we have shown that at $p$,
\begin{align*}
    \frac{1}{2}\Delta \phi-G'\phi \le 0.
\end{align*}
This contradicts to the assumption that $\phi$ takes a local positive minimum at $p$. Hence $\phi(p)=0$ and thus again by constant rank theorem, $D^2 u$ has rank $1$ everywhere.
\end{proof}

\section{Computation in the case of $\sigma_k$ and proof of Theorem \ref{sigman}}
\begin{proof}[Proof of Theorem \ref{sigman}]
Let $GG^{''}\le A(G')^2$, $\phi=\sigma_k(D^2 u)$, and we suppose that $\phi$ takes a local minimum at $p$. We may assume that $D^2 u$ is diagonalized at $p$ up to changing coordinates. In the following, we denote by $\sigma_k(\lambda|i)$ the symmetric function with $D^2 u$ deleting the $i$-row and $i$-column, and by $\sigma_k(\lambda|i, j)$ the symmetric function with $D^2 u$ deleting the $i, j$-rows and $i, j$-columns.

Then for $\forall \;1 \le m \le n$, it holds that at $p$,
\begin{align*}
    0=\phi_m=\sum_{i,j}\frac{\partial \sigma_k}{\partial u_{ij}}u_{ijm}=\sum_i\sigma_{k-1}(\lambda|i)u_{iim}.
\end{align*}
Hence at $p$, if $\sigma_{k-1}(\lambda|m)\ne 0$ for every $m\in \{1, \cdots, n\}$ (In fact, it is satisfied in our case when $k=n$, see the explanation after \eqref{finaln}), then we have
\begin{align}
\label{ummn}
    u_{mmm}=-\frac{\sum_{i\ne m}\sigma_{k-1}(\lambda|i)u_{iim}}{\sigma_{k-1}(\lambda|m)}.
\end{align}
Using \eqref{ummn} and equation \eqref{semi}, we compute that at $p$,
\begin{align}
    \Delta \phi=&\sum_m(\sum_{s,t,i,j}\frac{\partial^2 \sigma_k}{\partial u_{st}\partial u_{ij}}u_{stm}u_{ijm}+\sum_{i,j}\frac{\partial \sigma_k}{\partial u_{ij}}u_{ijmm})\nonumber\\
    =&\sum_m\sum_{i\ne j}\sigma_{k-2}(\lambda |i,j)u_{jjm}u_{iim}-\sum_m\sum_{i\ne j}\sigma_{k-2}(\lambda|i,j)u_{ijm}^2+\sum_{i,m}\sigma_{k-1}(\lambda|i)u_{iimm}\nonumber\\
    =&-2\sum_{m\ne j}\sigma_{k-2}(\lambda|m,j)u_{jjm}\frac{\sum_{l\ne m}\sigma_{k-1}(\lambda|l)u_{llm}}{\sigma_{k-1}(\lambda|m)}+\sum_{i,j \ne m}\sum_{i\ne j}\sigma_{k-2}(\lambda|i,j)u_{iim}u_{jjm}\nonumber\\
    &-2\sum_{i\ne m}\sigma_{k-2}(\lambda|i,m)u_{imm}^2-\sum_{i\ne j}\sum_{m\ne i,j}\sigma_{k-2}(\lambda|i,j)u_{ijm}^2+\sum_i\sigma_{k-1}(\lambda|i)(G^{''}u_i^2+G'u_{ii}).
\end{align}
Using that $$\sum_i u_{ii}\sigma_{k-1}(\lambda|i)=k\sigma_k=k\phi,$$
and \eqref{conditonofG}, we thus have
\begin{align}
\label{pie0}
    \Delta \phi-k G'\phi \le& -2\sum_{m\ne j}\sigma_{k-2}(\lambda|m,j)u_{jjm}\frac{\sum_{l\ne m}\sigma_{k-1}(\lambda|l)u_{llm}}{\sigma_{k-1}(\lambda|m)}+\sum_{i,j \ne m}\sum_{i\ne j}\sigma_{k-2}(\lambda|i,j)u_{iim}u_{jjm}\nonumber\\
    &-2\sum_{i\ne m}\sigma_{k-2}(\lambda|i,m)u_{iim}^2+\sum_i\frac{A}{G}(G'u_i)^2\sigma_{k-1}(\lambda|i)\nonumber\\
    =&-2\sum_{l\ne  m, j}\sum_{m\ne j}\sigma_{k-2}(\lambda|m,j)u_{llm}u_{jjm}\frac{\sigma_{k-1}(\lambda|l)}{\sigma_{k-1}(\lambda|m)}-2\sum_{j\ne  m}\sigma_{k-2}(\lambda|m,j)u_{jjm}^2\frac{\sigma_{k-1}(\lambda|j)}{\sigma_{k-1}(\lambda|m)}\nonumber\\
    &+\sum_{i,j \ne m}\sum_{i\ne j}\sigma_{k-2}(\lambda|i,j)u_{iim}u_{jjm}-2\sum_{i\ne m}\sigma_{k-2}(\lambda|i,m)u_{iim}^2+\sum_{i}\frac{A}{G}(\sum_{l}u_{ill})^2\sigma_{k-1}(\lambda|i)\nonumber\\
     =&-2\sum_{i\ne  m, j}\sum_{m\ne j}\sigma_{k-2}(\lambda|m,j)u_{iim}u_{jjm}\frac{\sigma_{k-1}(\lambda|i)}{\sigma_{k-1}(\lambda|m)}-2\sum_{i\ne  m}\sigma_{k-2}(\lambda|m,i)u_{iim}^2\frac{\sigma_{k-1}(\lambda|i)}{\sigma_{k-1}(\lambda|m)}\nonumber\\
    &+\sum_{i,j \ne m}\sum_{i\ne j}\sigma_{k-2}(\lambda|i,j)u_{iim}u_{jjm}-2\sum_{i\ne m}\sigma_{k-2}(\lambda|i,m)u_{iim}^2+\sum_m\frac{A}{G}(\sum_{i}u_{mii})^2\sigma_{k-1}(\lambda|m).
\end{align}
Note that by \eqref{ummn}, we get
\begin{align}
    \label{pie}
(\sum_{i}u_{mii})^2\sigma_{k-1}(\lambda|m)=&(u_{mmm}+\sum_{i\ne m}u_{iim})^2 \sigma_{k-1}(\lambda|m)\nonumber\\
=&\left(\sum_{i\ne m}\left(1-\frac{\sigma_{k-1}(\lambda|i)}{\sigma_{k-1}(\lambda|m)}\right)u_{iim}\right)^2\sigma_{k-1}(\lambda|m)\nonumber\\
=&\frac{\Big(\sum_{i\ne m}\sigma_{k-2}(\lambda|m,i)(\lambda_i-\lambda_m)u_{iim}\Big)^2}{\sigma_{k-1}(\lambda|m)}.
\end{align}
Hence \eqref{pie0}-\eqref{pie} lead to that at $p$,
\begin{align}
    \label{finaln}
&\Delta \phi-kG'\phi \nonumber\\ \le &\sum_{m=1}^n\frac{1}{G\sigma_{k-1}(\lambda|m)}\Bigg[\sum_{i\ne m}\Big(A\sigma_{k-2}^2(\lambda|m,i)(\lambda_i-\lambda_m)^2-2G\sigma_{k-2}(\lambda|m,i)(\sigma_{k-1}(\lambda|m)+\sigma_{k-1}(\lambda|i))\Big)u_{iim}^2\nonumber\\
&+\sum_{i,j \ne m}\sum_{i\ne j}\Big(A\sigma_{k-2}(\lambda|m,i)\sigma_{k-2}(\lambda|m,j)(\lambda_i-\lambda_m)(\lambda_j-\lambda_m)\nonumber\\
&-2G\sigma_{k-2}(\lambda|m,j)\sigma_{k-1}(\lambda|i)+G\sigma_{k-1}(\lambda|m)\sigma_{k-2}(\lambda|i,j)\Big)u_{iim}u_{jjm}\Bigg]\nonumber\\
:=&\sum_{m=1}^n\frac{1}{G\sigma_{k-1}(\lambda|m)} (I+II).
\end{align}

In the following we suppose $k=n$, and set $u_{ii}(p)=\lambda_i$. To prove Theorem \ref{sigman}, by \cite{KL87}, it suffices to show $\phi(p)=0$. If $\phi(p)>0$, then for each $m$, $\sigma_{n-1}(\lambda|m)\ne 0$ and the previous computations are valid since in every step the denominants are not zero.

Then for $\forall$ $m$ fixed,
\begin{align}
    \label{1n}
I=&\sum_{i\ne m}u_{iim}^2\Big(A\frac{\sigma_n^2}{\lambda_m^2\lambda_i^2}(\lambda_i-\lambda_m)^2-2G\frac{\sigma_n}{\lambda_i\lambda_m}(\frac{\sigma_n}{\lambda_m}+\frac{\sigma_n}{\lambda_i})\Big)    \nonumber\\
=&\sum_{i\ne m}u_{iim}^2\frac{\sigma_n^2}{\lambda_m^2\lambda_i^2}\Big((A-2)(\lambda_i^2+\lambda_m^2)-(2A+4)\lambda_i\lambda_m-2\sum_{l\ne i,m}\lambda_l(\lambda_i+\lambda_m)\Big),
\end{align}
and
\begin{align}
    \label{2n}
II=&  \sum_{i,j \ne m}\sum_{i\ne j}u_{iim}u_{jjm}\Big(A\frac{\sigma_n}{\lambda_i\lambda_m}\frac{\sigma_n}{\lambda_j\lambda_m}(\lambda_j-\lambda_m)(\lambda_i-\lambda_m)-G(2\frac{\sigma_n}{\lambda_m\lambda_j}\frac{\sigma_n}{\lambda_i}-\frac{\sigma_n}{\lambda_m}\frac{\sigma_n}{\lambda_i\lambda_j})\Big)\nonumber\\
=&\frac{\sigma_n^2}{\lambda_m^2}\sum_{i,j \ne m}\sum_{i\ne j}u_{iim}u_{jjm}\frac{1}{\lambda_i\lambda_j}\Big((A-1)\lambda_m^2+A\lambda_i\lambda_j-(A+1)\lambda_i\lambda_m-(A+1)\lambda_j\lambda_m-\sum_{l\ne i,j,m}\lambda_l\lambda_m\Big).
\end{align}
So we can rewrite
\begin{align}
    \label{1+2}
I+II=\frac{\sigma_n^2}{\lambda_m^2}\left( a_1\lambda_m^2+a_2\lambda_m+a_3\right),
\end{align}where
\begin{align}
    \label{a1}
a_1=\sum_{i\ne j}\sum_{i,j\ne m}(A-1)\frac{u_{iim}u_{jjm}}{\lambda_i\lambda_j}+\sum_{i\ne m}\frac{A-2}{\lambda_i^2}u_{iim}^2,
\end{align}
\begin{align}
\label{a2}
    a_2=&-\sum_{i,j\ne m}\sum_{i\ne j}\frac{2A+2}{\lambda_j}u_{iim}u_{jjm}-\sum_{i,j\ne m}\sum_{i\ne j}(\sum_{l\ne i,j,m}\lambda_l) \frac{u_{iim}u_{jjm}}{\lambda_i\lambda_j}\nonumber\\
    &-\sum_{i\ne m}\frac{2A+4}{\lambda_i}u_{iim}^2-\sum_{i\ne m}\frac{u_{iim}^2}{\lambda_i^2}(2\sum_{l\ne i,m}\lambda_l),
\end{align}
\begin{align}
    \label{a3}
a_3=&\sum_{i\ne j}\sum_{i,j\ne m} Au_{iim}u_{jjm}-2\sum_{i\ne m}u_{iim}^2\frac{\sum_{l\ne i,m}\lambda_l}{\lambda_i} +\sum_{i\ne m}(A-2)u_{iim}^2.
\end{align}
Let $A \in (1,2)$ and thus $\alpha: =\frac{2-A}{A-1} \in (0,\infty)$. Furthermore we define $a_{ij}=\alpha$ when $i=j$, and $a_{ij}=-1$ when $i\ne j$. Hence in order that $a_1\le 0$, it suffices that $(n-1) \times (n-1)$ matrix $(a_{ij})$ must be positive semi-definite. Let $e$ be the ones vector, so the $k$-th leading principle minor $L_k$ of the matrix $(a_{ij})$ is $(\alpha+1) I_k-e \otimes e$, where $I_k$ is the $k\times k$ identity matrix. We compute the determinant of $L_k$ as
\begin{align*}
    det (L_k)=(\alpha+1)^kdet(I_k-\frac{e\otimes e}{\alpha+1})=(\alpha+1)^k(1-\frac{e\cdot e}{\alpha+1})=(\alpha+1)^{k-1}(\alpha+1-k).
\end{align*}
Hence $(a_{ij})\ge 0$ is equivalent to $det(L_k)\ge 0$, and hence $\alpha \ge k-1$ for all $k=1,2, \cdots, (n-1)$. Hence in order that $a_1\le 0$, we need $\alpha=\frac{2-A}{A-1}\ge n-2$. Hence $A \in (1,\frac{n}{n-1}]$. Therefore, when $A\le \frac{n}{n-1}$, $a_1 \le 0$.

Let $y_i=\frac{u_{iim}}{\lambda_i}$, $\lambda=\sum_{i\ne m}\lambda_i$ and $p_i=\frac{\lambda_i}{\lambda}$. Then we have
\begin{align}
\label{a20}
    a_2=&-\lambda\left(\sum_{i,j\ne m}\sum_{i\ne j}(2A+2)p_iy_iy_j+\sum_{i,j\ne m}\sum_{i\ne j}(1-p_i-p_j)y_iy_j+\sum_{i\ne m}(2A+4)p_iy_i^2+\sum_{i\ne m}(2-2p_i)y_i^2\right)\nonumber\\
    =&-\lambda \left(\sum_{i\ne m}\Big(2+(2A+2)p_i\Big)y_i^2+\sum_{i,j\ne m}\sum_{i\ne j}(1+2Ap_i)y_iy_j\right)
\end{align}
Since
\begin{align*}
    \sum_{i,j\ne m}\sum_{i\ne j}p_iy_iy_j=(\sum_{i\ne m}p_iy_i)(\sum_{i\ne m}y_i)-\sum_{i\ne m}p_iy_i^2,
\end{align*} we have
\begin{align}
    \label{a21}
-a_2/\lambda=&\sum_{i,j\ne m}\sum_{i\ne j}y_iy_j+2A(\sum_{i\ne m}p_iy_i)(\sum_{i\ne m}y_i)+2\sum_{i\ne m}p_iy_i^2+2\sum_{i\ne m}y_i^2\nonumber\\
=&(\sum_{i\ne m}y_i+A\sum_{i\ne m}p_iy_i)^2+\sum_{i\ne m}y_i^2+2\sum_{i\ne m}p_iy_i^2-A^2(\sum_{i\ne m}p_iy_i)^2
\end{align}
Since $\sum_{i\ne m}p_i=1$, from Schwarz inequality we have
\begin{align*}
    (\sum_{i\ne m}p_iy_i)^2 \le \sum_{i\ne m}p_iy_i^2.
\end{align*}
Hence by \eqref{a21}, we have that when $A \le \sqrt{2}$, $a_2 \ge 0$.  When $n \ge 4$, $\frac{n}{n-1} <\sqrt{2}$, and hence $a_2 \le 0$ if $n\ge 4$.

When $n=3$ and $A=3/2$, one can still conclude that $a_2 \le 0$ by directly looking at \eqref{a20} and finding out that the matrix
\begin{align*}
    \left(\begin{matrix}
       2+5p_i & 1+3p_i\\
    1+3p_j & 2+5p_j
    \end{matrix}\right)
\end{align*}
is positive definite when $p_i+p_j=1$, $p_i, p_j \ge 0$.

Therefore, we have shown that when $A=\frac{n}{n-1}$, $a_2\le 0$, for all dimensions $n\ge 3$.

Now we estimate $a_3$. Let $b_i=u_{iim}$, and $\lambda=\sum_{i\ne m}\lambda_i$. Hence
\begin{align}
    \label{a31}
a_3=&-\left(\sum_{i\ne m}\sum_{j\ne i,m}\Big(\frac{2\lambda_j}{\lambda_i}b_i^2 -Ab_ib_j\Big)+(2-A)\sum_{i\ne m}b_i^2\right)\nonumber\\
=&-\left(\sum_{i\ne m}\sum_{j\ne i,m}\Big(\frac{\lambda_j}{\lambda_i}b_i^2+\frac{\lambda_i}{\lambda_j}b_j^2 -Ab_ib_j\Big)+(2-A)\sum_{i\ne m}b_i^2\right)
\end{align}
Hence for $A \le 2$, by \eqref{a31}, $a_3\le 0$. Hence  \eqref{a1}-\eqref{a3} are all nonpositive when $A\le \frac{n}{n-1}$.

Overall, by \eqref{finaln}-\eqref{a31}, at $p$ we have that when $A\le \frac{n}{n-1}$,
\begin{align*}
    \Delta \phi-nG'\phi \le 0.
\end{align*}
This forces that $\phi(p)=0$, and this leads to a contradiction. Therefore, $\phi(p)=0$, and thus $\phi \equiv 0$ by classical constant rank theorem.
\end{proof}

\end{document}